\documentclass{article}

\usepackage{amssymb, amsmath, amsthm, graphicx,mathrsfs} 
\usepackage{xcolor}
\usepackage{fullpage}
\usepackage[colorlinks=true,linkcolor=teal,citecolor=teal,filecolor=teal,urlcolor=teal]{hyperref}
\usepackage{cleveref}[hyperref]
\usepackage{multirow}
\usepackage[backend=biber,style=numeric,sorting=nty,  maxnames=99,
  minnames=99, giveninits=true]{biblatex}
\renewbibmacro{in:}{}
\bibliography{citations}

\allowdisplaybreaks

\newtheorem{thm}{Theorem}[section]

\newtheorem{conj}[thm]{Conjecture}
\newtheorem{quest}[thm]{Question}
\newtheorem{obs}[thm]{Observation}

\newtheorem{claim}[thm]{Claim}

\def\cov{{\rm{cov}}}

\date{\today}

\newcommand{\ceil}[1]{\left\lceil #1 \right\rceil}

\title{Covering complete $r$-partite hypergraphs with few monochromatic components}
\date{\today}
\author{
{{Luke Hawranick}}\thanks{
\footnotesize {University of South Carolina, Columbia, SC 29208, USA. Email: {\tt hawranil@email.sc.edu}.}}
\and{{Ruth Luo}}\thanks{University of South Carolina, Columbia, SC 29208, USA. Email: {\tt ruthluo@sc.edu}. Research of this author
is supported in part by NSF grant DMS-2452134.
}}
\begin{document}
\maketitle

\vspace{-0.3in}

\begin{abstract}An edge-coloring of a hypergraph is {\em spanning} if every vertex sees every color used in the coloring.
In this paper, we prove that for $k \geq 2r \geq 6$, in any spanning $k$-coloring of the edges of a complete $r$-partite $r$-uniform hypergraph $H$, the vertices of $H$ can be covered by a set of at most $k-r+1$ monochromatic connected components. This proves a conjecture of Gy\'arf\'as and Kir\'aly which is related to a special case of Ryser's conjecture. We also prove that for $k \in \{2,3\}$, every spanning $k$-edge-coloring of a complete bipartite graph admits a covering of its vertices using at most $k$ monochromatic components.
\end{abstract}

\section{Introduction}

A \emph{vertex cover} of a hypergraph $H$ is a subset $S$ of vertices such that for every edge $e$ in $H$, we have $e \cap S \ne \varnothing$. A \emph{matching} of $H$ is a set of pairwise disjoint edges. For every $r$-uniform hypergraph $H$, the size of a minimum vertex cover is at most $r$ times as large as the size of a maximum matching. Indeed, every edge of $H$ intersects an edge of a maximum matching. This bound is sharp, as seen by the hypergraph of all $r$-sets from a ground set of size $rt-1$ where $t \in \mathbb N$. A well-known conjecture of Ryser states that if $H$ is $r$-partite, then the multiplicative factor in the bound can be improved to $r-1$.
\begin{conj}[Henderson \cite{Henderson}, attributed to Ryser]\label{conj:ryser}
    Let $H$ be an $r$-partite, $r$-uniform hypergraph. The size of a minimum vertex cover of $H$ is at most $r-1$ times the size of a maximum matching of $H$.
\end{conj}
When $r=2$, the conjecture is equivalent to K\H{o}nig's Theorem for graphs. The conjecture is also proven for $r=3$ by Aharoni~\cite{Aharoni} and has partial progress for $r=4,5$ due to Haxell and Scott~\cite{HS}, but remains open in general. If true, Conjecture~\ref{conj:ryser} would be sharp for some values of $r$. For instance when a projective plane of order $r-1$ exists, we consider its corresponding hypergraph (in which points are vertices and lines are edges). The subhypergraph induced by removing a single vertex from the projective plane would be an extremal example \cite{TuzaARS}. Moreover, Abu-Khazneh, Bar\'at, Prokrovskiy, and Szab\'o ~\cite{ABPS} constructed a hypergraph which acheives the same bound when there exists a projective plane of order $r-2$. 

Motivated by the appearance of projective planes in lower bounds, there has been much work to solve the conjecture for hypergraphs in which the size of a maximum matching is $1$. Such hypergraphs are called \emph{intersecting}, as every pair of edges shares a vertex. This special case for intersecting hypergraphs is equivalent to the following conjecture about monochromatic components in an edge-colored complete graph.  \begin{conj}[Gy\'arf\'as \cite{Gyarfas}]\label{conj:equiv}
    In any coloring of the edges of the complete graph with $r$ colors, the number of monochromatic components required to cover $V(G)$ is at most $r-1$.
\end{conj}

Conjecture~\ref{conj:ryser} and Conjecture~\ref{conj:equiv} for intersecting hypergraphs are equivalent by the following reasoning \cite{Kiraly}. Suppose $H$ is an intersecting hypergraph from Conjecture~\ref{conj:ryser} with partition $V(H)=\bigcup_{i=1}^rV_i$. We construct a graph $G_H$ with $V(G_H)=E(H)$. For $u,v \in V(G_H)$, color $uv \in E(G_H)$ according to the smallest index of the parts $V_i$ in which the edges $u$ and $v$ meet in $H$. For every vertex $u$ of $G_H$ contained in a monochromatic component of $G_H$ in color $i$, the corresponding edge in $H$ contains a unique vertex of $H$ in part $V_i$, call it $u(i)$. If $uw$ is an edge in color $i$, then $u(i) = w(i)$. Moreover, for any $x \in V(G_H)$ contained in the same component as $u$ of color $i$, we have $x(i) = u(i)$ since this component is connected. Thus a set of $k$ monochromatic components that cover $V(G_H)$ corresponds to a vertex cover of $H$ of size $k$.

Now let $G$ be an $r$-colored complete graph from Conjecture~\ref{conj:equiv}. Arbitrarily order the monochromatic components of every color---let $C_{i,j}$ be component $j$ of color $i$. Construct a hypergraph $H_G$ with $V(H_G) = \{C_{i,j}\}_{i\in [r],j}$, and partition $V(H_G)$ according to the color of component $C_{i,j}$. For every $v \in V(G)$ and for every  color $i$, denote by $j_{v,i}$ the number of the component of color $i$ that contains $v$. Then add the $r$-tuple $\{j_{v,i}\}_{i \in [r]}$ to $E(H_G)$ for each vertex $v \in V(G)$. The $r$-uniform, $r$-partite hypergraph $H_G$ is intersecting, as for every $e,f \in E(H_G)$, there correspond vertices $u,v \in V(G)$. The edge $uv$ is colored with some color in $G$, say color $k$, implying that $u$ and $v$ lie in the same monochromatic component of color $k$. Hence, $e$ and $f$ intersect in $V_k$. Then a vertex cover $H_G$ of size $k$ corresponds to a set of $k$ monochromatic components that together cover all vertices in $G$.

We note that this conjecture has been proven for $r \le 5$ \cite{Tuza} by Tuza, but the cases $r \geq 6$ remain open. See~\cite{rysersurvey} for a survey of Ryser's conjecture and many of its variants.

\bigskip

Gy\'arf\'as \cite{Gyarfas} and Lehel \cite{Lehel} asked about a bipartite analogue of Conjecture~\ref{conj:equiv}. We use {\em biclique} to refer to any complete bipartite graph. It was conjectured that every $r$-edge-coloring of a biclique $G$ requires at most $2r-2$ monochromatic components to cover $V(G)$. This bound would be sharp---given by an example appearing in \cite{Gyarfas} and repeated in \cite{CFGLT}.

\begin{conj}[\cite{Gyarfas, Lehel}]\label{bicliqueconj}In every $r$-coloring of the edges of a biclique, the vertex set can be covered
by the vertices of at most $2r-2$ monochromatic components.
\end{conj}

Decades later, Chen, Fujita, Gy\'arf\'as, Lehel, and T\'oth~\cite{CFGLT} studied the problem more extensively, proving the conjecture for $r\leq 5$ and reducing the general case to design-type conjectures. 

We will discuss these problems in the more general setting of hypergraphs.
A {\em (connected) component} of a hypergraph $H$ is a maximal subhypergraph $H' \subseteq H$ such that every pair of vertices $u,v \in V(H')$ is connected by a sequence of edges in $H'$ in which the first edge contains $u$, the last edge contains $v$, and adjacent edges in the sequence intersect. Note that this sequence can be empty. Moreover, if the edges of $H$ are colored, then a {\em monochromatic component} of some color $c$ is a component of the subhypergraph of $H$ on the edges with color $c$.

Kir\'aly \cite{Kiraly} proved the analogue of Conjecture~\ref{conj:equiv} for $r$-uniform hypergraphs with $r \ge 3$. 

\begin{thm}[Kir\'aly \cite{Kiraly}]If $H$ is a $k$-edge-colored complete $r$-uniform hypergraph with $r \geq 3$, then at most $\ceil{k/r}$ monochromatic components are needed to cover $V(H)$, and this bound is sharp.
\end{thm}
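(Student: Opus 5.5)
The plan is to pass to the dual formulation used earlier for the equivalence of Conjectures~\ref{conj:ryser} and~\ref{conj:equiv}, and then to induct on $k$, using a ``witness vertex'' argument for the base case. For a vertex $v$ and a color $i$, let $C_i(v)$ be the component of color $i$ containing $v$; this is empty if $v$ lies on no edge of color $i$. Completeness gives one key property. Any $r$ vertices $x_1,\dots,x_r$ span an edge of some color $c$, and then $C_c(x_1)=\dots=C_c(x_r)$. Any set of at most $r$ vertices lies in a common edge, provided $|V(H)|\ge r$. The smaller case $|V(H)|<r$ is trivial.

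\textbf{Base case $k\le r$.} I will show that a single monochromatic component covers $V(H)$. Suppose not, and fix $v_0$. For each color $i\le k$, pick a witness $v_i\notin C_i(v_0)$.

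If $k\le r-1$, then $\{v_0,\dots,v_k\}$ has at most $r$ elements, so it lies in an edge. If that edge has color $c$, then $v_c\in C_c(v_0)$, a contradiction.

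If $k=r$, first consider the edge $\{v_0,\dots,v_r\}\setminus\{v_j\}$ for $j\ge 1$. Its color can only be $j$, since any other color $c$ would put $v_c$ into $C_c(v_0)$. Now look at the edge $\{v_1,\dots,v_r\}$, say of color $c$. It shares with the edge omitting $v_c$ (which also has color $c$) some vertex $v_j$ with $j\ne c$; this uses $r\ge 2$. These two color-$c$ edges are therefore in the same color-$c$ component, which forces $v_c\in C_c(v_0)$, again a contradiction.

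\textbf{Inductive step.} For $k>r$, write $k=qr+s$. The aim is to split off $r$ colors at the cost of one component. I would strengthen the induction hypothesis to the following ``relative'' statement. For any set $U\subseteq V(H)$ and any set $I$ of colors, suppose every vertex of $U$ that is not covered by a fixed family of components sees only colors in $I$ on the edges it spans with other such vertices. Then the remaining vertices can be covered by $\ceil{|I|/r}$ components of colors in $I$.

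Concretely, I will apply the base-case argument to colors $1,\dots,r$ restricted to the edges containing a fixed vertex $v_0$. This should produce either a component $C$ of color at most $r$ covering everything, or a witness configuration showing that all uncovered vertices miss those $r$ colors entirely. In the second case the uncovered vertices induce a complete $r$-uniform hypergraph colored with only $k-r$ colors, and induction applies. Making this dichotomy clean is the main obstacle, because the base-case argument assumes every edge uses one of the chosen $r$ colors. I will first try the following. Choose $C$ to be a color-$i$ component containing $v_0$ that maximizes coverage. Then show, via the exchange argument from the case $k=r$ (swapping one witness with an uncovered vertex), that every edge inside $V\setminus C$ avoids color $i$. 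I will then iterate this to avoid $r$ colors after paying for only one component. If that fails, the fallback is the $r$-wise intersecting dual family of $k$-tuples. There the claim becomes that a $k$-partite family in which every $r$ members agree in some coordinate has a transversal of size $\ceil{k/r}$, which I would prove by induction on $k$ via the same witness trick.

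\textbf{Sharpness.} Let $t=\ceil{k/r}$. I would build a coloring whose components are forced to be small. Take vertex classes indexed by the $(t-1)$-subsets of the colors in a suitable design-like pattern, and color each edge $e$ by a color missing from the classes of its vertices, using pigeonhole since $|e|=r$ and $k>(t-1)r$. Then every color-$i$ component avoids all classes that contain $i$, so any $t-1$ components miss some class.
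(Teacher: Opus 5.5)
The paper gives no proof of this theorem; it only cites Kir\'aly, so I judge your proposal on its own. Your base case $k\le r$ is correct. Your sharpness construction also works: take all $(t-1)$-subsets of colors, and make each class have at least $t$ vertices so that singleton components cannot rescue a cover.

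The genuine gap is the inductive step. It carries the entire difficulty of the theorem, you do not carry it out, and the dichotomy it rests on is false. Take $k\ge r+1$ and large disjoint classes $A_1,\dots,A_k$. Color each edge $e$ with some $i$ such that $e\cap A_i=\varnothing$; this is possible because $e$ meets at most $r<k$ classes. Inside each $A_j$, use every color other than $j$. No vertex of $A_i$ lies on an edge of color $i$, so every nontrivial color-$i$ component is disjoint from $A_i$. Hence after deleting \emph{any} single monochromatic component, the remaining vertices still span at least $k-1$ colors, never only $k-r$. So one component cannot split off $r$ colors. Recursing into $H[V\setminus C]$ is lossy in any case, since its monochromatic components are in general smaller than those of $H$.

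Your fallback is only the dual restatement, and the witness trick does not generalize by itself. With $v_0$ and one witness per block of $t$ colors, you need $1+\lceil k/t\rceil\le r$, which only gives $\lceil k/(r-1)\rceil$. With $r$ blocks and $r+1$ vertices $v_0,\dots,v_r$, the color of $\{v_1,\dots,v_r\}$ lies in some block $B_j$, and so does the color of $\{v_0,\dots,v_r\}\setminus\{v_j\}$. Once blocks contain more than one color, these two colors need not coincide, so the contradiction from your case $k=r$ disappears.

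The missing idea is to obtain the extra block of $t$ colors from a pair of vertices at maximum Hamming distance, with no induction at all.
\begin{itemize}
\item Suppose no $t=\lceil k/r\rceil$ components cover $V(H)$. Then for any at most $t$ components, some vertex avoids them all.
\item For vertices $x,y$, let $D(x,y)$ be the set of colors in which they lie in different components, so $\delta(x,y)=|D(x,y)|$.
\item Choose $x,y$ with $d=\delta(x,y)$ maximum, and let $A$ be the set of colors where $x$ and $y$ share a component.
\item If $|A|\ge t$, pick $B\subseteq A$ with $|B|=t$, and a vertex $z$ outside the components of $x$ in the colors of $B$. Then $B\subseteq D(x,z)\cap D(y,z)$ and $D(x,y)\subseteq D(x,z)\cup D(y,z)$. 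So $\delta(x,z)+\delta(y,z)\ge 2t+d$, and maximality forces $d\ge 2t$.
\item Thus $|A|\le\max(t-1,k-2t)\le (r-2)t$, using $r\ge 3$ and $k\le rt$.
\item Now add $r-2$ vertices one at a time. Each avoids the common components of the current set in up to $t$ of the colors where that set still agrees.
\end{itemize}
You end with at most $r$ vertices that share a component in no color. This contradicts the fact that they lie in a common edge.
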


\subsection{Spanning colorings and new results}

We note that Conjecture~\ref{conj:equiv} is obviously true for colorings in which the incident edges of some vertex receive only $r-1$ or fewer colors, as we can take the at most $r-1$ monochromatic stars centered at this vertex. Colorings in which, for every vertex, the incident edges receive all $r$ colors are called {\bf spanning}. In other words, in a spanning coloring, every vertex sees every color.

A {\em complete $r$-partite $r$-uniform} hypergraph $H$ is a hypergraph with vertex partition $V(H) = \bigcup_{i=1}^r V_i$ and $E(H) = \{\{v_1, \ldots, v_r\}: v_i \in V_i, i \in [r]\}$. 
Gy\'arf\'as and Kir\'aly studied the analogue of Conjecture~\ref{conj:equiv} for complete $r$-partite $r$-uniform hypergraphs with spanning colorings. 
 
Define the {\bf covering number}, $\cov(r,k)$, to be the minimum integer $\ell$ such that in any spanning $k$-coloring of the edges of any complete $r$-partite $r$-uniform $H$, $V(H)$ can be covered by at most $\ell$ monochromatic components. They proved the following result for the case $k \geq r+1$. 

\begin{thm}[Gy\'arf\'as, Kir\'aly~\cite{GK}]\label{GKthm} For all $t\geq 1$ and $r \geq 3$, $\cov(r,r+t) \geq t+1$. Moreover when $t \leq r-1$, $\cov(r,r+t) = t+1$. 
\end{thm}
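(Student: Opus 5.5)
Two parts need proof: a construction giving $\cov(r,r+t)\ge t+1$ for all $t\ge 1$, and an upper bound $\cov(r,r+t)\le t+1$ when $t\le r-1$.

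\textbf{Lower bound.} The plan is to build a spanning $(r+t)$-coloring of a complete $r$-partite $r$-uniform $H$ in which any $t$ monochromatic components miss some vertex. I would take each part $V_i$ large and label its vertices by elements of $[r+t]$, or by several copies of each label. An edge $\{v_1,\dots,v_r\}$ gets a color determined by the multiset of its labels, designed so that two properties hold:
\begin{itemize}
\item[(i)] every vertex sees all $r+t$ colors, so the coloring is spanning;
\item[(ii)] each monochromatic component of color $c$ avoids every vertex whose label is $c$ (or some fixed label class), in every part.
\end{itemize}
A natural rule is to color an edge by the smallest label in $[r+t]$ not appearing among its $r$ labels. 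Such a label exists because $r<r+t$. Property (ii) is then immediate: no edge containing a vertex labelled $c$ has color $c$.

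Now suppose $t$ components of colors $c_1,\dots,c_t$ are chosen. Since $r+t>t$, some label $c\notin\{c_1,\dots,c_t\}$ exists. I would then need to exhibit an uncovered vertex, perhaps by choosing components to be small (for example, splitting each color class further by part-labels). This step needs care. If the "smallest missing label" rule makes components too large, I would refine it. For instance, I would give each part its own label set and color by a function of the labels so that each color class splits into components indexed by label patterns, and then argue by pigeonhole that $t$ components cannot hit every label class in every part.

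\textbf{Upper bound for $t\le r-1$.} Here I would fix a vertex $v\in V_1$ and use spanning-ness: $v$ sees all $r+t$ colors. Given any edge $e$ of color $c$, we can swap one of its coordinates at a time. The key observation is that, since $H$ is complete $r$-partite, any two vertices in different parts lie in a common edge, and any two edges $e,f$ are joined by a chain of at most $r$ edges $e=e_0,e_1,\dots,e_r=f$, where consecutive edges differ in one part and hence share $r-1\ge 2$ vertices.

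I would take, for each of $t+1$ colors chosen cleverly (say those seen on edges through a fixed pair or set of vertices), the component containing a fixed base edge structure. Then, for an arbitrary vertex $w$, I would show that among the $r$ edges of a chain from the base to $w$, some color among the chosen $t+1$ connects $w$ to the base. The counting should be: a vertex $w$ not covered forces its incident edges to avoid $t+1$ colors in a way that pushes the number of colors needed above $r+t$ once $t\le r-1$.

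\textbf{Main obstacle.} I expect the upper bound to be the hard step, specifically choosing which $t+1$ components to take so that a counting or pigeonhole argument using $t\le r-1$ closes. My first attempt would be to take the components through a single fixed edge $e^*$ of colors $c_1,\dots,c_{t+1}$, and to show that an uncovered $w$ yields $r$ edges through $w$ sharing vertices with $e^*$ whose colors are all distinct and all outside the chosen set. That would give $r+t+1$ colors, a contradiction.
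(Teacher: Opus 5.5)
The paper does not prove this statement itself. It quotes it from \cite{GK}, calling the lower bound a nontrivial construction there. Your proposal therefore has to stand on its own, and both halves have genuine gaps.

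\textbf{Lower bound.} Your design goals (i) and (ii) contradict each other. In a spanning coloring every vertex lies on an edge of every color, so a vertex labelled $c$ always lies in some component of color $c$. Requiring color-$c$ components to avoid label-$c$ vertices is exactly requiring the coloring not to be spanning; under the smallest-missing-label rule, a vertex labelled $c$ never sees color $c$.

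The rule also fails on the covering side, if every part contains all labels:
the color-$1$ edges (transversals with no label $1$) form a single component containing every vertex not labelled $1$;
the color-$2$ edges (transversals containing label $1$ but not label $2$) form a single component containing every vertex labelled $1$.
So two components cover $V(H)$. Your pigeonhole step, ``some label $c$ is not among the chosen colors,'' cannot produce an uncovered vertex, because label-$c$ vertices can be covered by components of other colors.

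A valid example needs three properties at once: every component of every color meets all $r$ parts (forced by spanning); every part meets at least $t+1$ components of every color (otherwise the argument of Claim~\ref{samepartdiffcolor} gives a $t$-cover); and no $t$ components cover. ``Refining the rule'' does not supply such an object, and building one is precisely the nontrivial content of \cite{GK}.

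\textbf{Upper bound.} The proposal does not yet contain an argument.
\begin{itemize}
\item The chain $e_0,\dots,e_r$ gives no monochromatic connection, since consecutive edges may have different colors.
\item ``The components through $e^*$ of colors $c_1,\dots,c_{t+1}$'' is well defined only for the color of $e^*$. In any other color the vertices of $e^*$ may lie in different components, so you have not even fixed $t+1$ components.
\item For the same reason, an uncovered $w$ can lie on an edge of a chosen color together with a vertex of $e^*$; that vertex only has to lie outside the chosen component of that color.
\item Nothing forces the edges through $w$ that meet $e^*$ to have distinct colors. Spanning only says $w$ sees each color somewhere.
\item The hypothesis $t\le r-1$ is never used. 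An argument oblivious to it would also settle $t\ge r$, which this paper handles only with a substantially longer argument.
\end{itemize}

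The missing idea is to encode each vertex by its vector of component indices. Assuming no $t+1$ components cover, Claims~\ref{rsame}--\ref{smalldist} hold for every $t\ge 1$:
every transversal agrees in some coordinate;
any constraints on $t+1$ distinct coordinates can be avoided simultaneously by some vertex;
each part meets at least $t+2$ components of each color;
vertices in different parts agree in at least $r-1$ coordinates.
The proof then consists of using these facts to build a transversal whose vectors agree in no coordinate.

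A useful extra tool is the $r$-partite version of Claim~\ref{distinguishing}. For any transversal $x_1,\dots,x_r$, the components shared by $r-1$ of the $x_i$ already cover $V(H)$. Hence, if no $t+1$ components cover, at least $t+2$ coordinates must carry an $(r-1)$-fold agreement.
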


For $r \ge 3$ and $k \le r$, they remark that $\cov(r,k) = 1$. The following was conjectured for the remaining cases.

\begin{conj}[Gy\'arf\'as, Kir\'aly~\cite{GK}]\label{GKconj} For all $t\geq 1$ and $r \geq 3$, $\cov(r,r+t) = t+1$.
\end{conj}

In this paper, we prove Conjecture~\ref{GKconj} by completing the case $t\geq r$.  
\begin{thm}\label{main}For all $t \geq r \geq 3$, $\cov(r,r+t) = t+1$.
\end{thm}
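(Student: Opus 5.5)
The lower bound $\cov(r,r+t)\ge t+1$ is already supplied by Theorem~\ref{GKthm}, so only the upper bound needs proof. Let $k=r+t\ge 2r$. We must show that in any spanning $k$-coloring of a complete $r$-partite $r$-uniform hypergraph $H$ with parts $V_1,\dots,V_r$, the vertex set can be covered by $k-r+1$ monochromatic components. The plan is to exploit how large the monochromatic components are in the $r$-partite setting, since any two vertices in different parts lie in a common edge.

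First we record some structure. Fix a vertex $v\in V_1$ and a color $c$. Since the coloring is spanning, $v$ lies in some edge of color $c$, and hence in a unique component $C_c(v)$ of color $c$. Each such component meets every part, because every edge meets every part.

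The covering will use the components through a fixed vertex. Pick any vertex $v$ and take the $k$ components $C_1(v),\dots,C_k(v)$. Any vertex $u$ in a different part from $v$ shares an edge with $v$, so $u$ lies in one of these components. Thus these $k$ components cover $V(H)\setminus V_i$, where $V_i$ is the part of $v$. This gives the trivial bound $k$. The goal is to save $r-1$ components. To do so, consider for each vertex $u\notin V_i$ the set $S(u)$ of colors $c$ with $u\in C_c(v)$. We want to find $t+1$ colors, together with possibly a couple of extra components, that cover everything. The natural approach is a counting and pigeonhole argument over the edges through $v$, combined with a second vertex $w$ in another part, so that $V_i$ is also covered.

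For the key step I would try induction on $t$, using the base case $t\le r-1$ from Theorem~\ref{GKthm}. Given a spanning $(r+t)$-coloring with $t\ge r$, the idea is to merge two colors, or recolor one color class, to obtain a spanning $(r+t-1)$-coloring. By induction its vertex set is covered by $t$ components. Each merged component is a union of original components, so we would need to split it back at the cost of only one extra component overall. To make the merging efficient, I would choose two colors $a,b$ such that some color-$a$ component and some color-$b$ component intersect heavily, for example both containing a common vertex $x$. Then we merge only the color-$b$ edges inside $C_b(x)$ into color $a$. We must check that the coloring stays spanning: every vertex still sees color $a$, and color $b$ either disappears entirely or still reaches every vertex. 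Here the hypothesis $k\ge 2r$ should give enough slack, through a pigeonhole argument on a vertex's $k$ colors versus the $r$ parts, to find a color whose components can be absorbed.

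The main obstacle is making this reduction rigorous. Merging can destroy the spanning property, or can create a merged component that requires two original components to recover, and this must happen at most once. If the induction fails, the fallback is a direct argument. Choose a vertex $v$ and an edge $e\ni v$. Then show that all but $t+1$ of the colors have components through $v$ that are already contained in the union of the others together with the components through a second vertex. This would be done by a careful case analysis on whether some color forms a single spanning component.
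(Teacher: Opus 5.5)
There is a genuine gap. Your lower-bound citation and your preliminary observations are fine, but the proposal never proves the upper bound $\cov(r,r+t)\le t+1$. The inductive step is left as an intention, and as described it cannot work.

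The partial merge (recoloring only the color-$b$ edges of $C_b(x)$ with color $a$) destroys the spanning property. A vertex of $C_b(x)$ sees color $b$ only through edges of $C_b(x)$. After recoloring, the vertices of $C_b(x)$ no longer see $b$ while vertices outside it still do. The only exception is $C_b(x)=V(H)$, and then one component already covers $H$ and there is nothing to prove.

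Merging a whole color class $b$ into $a$ does keep the coloring spanning. But a component of the merged color is a chain of possibly many color-$a$ and color-$b$ components; in a counterexample each color has at least $t+2$ components, each meeting every part. So a $t$-cover of the merged coloring gives no $(t+1)$-cover of the original. ``Splitting back at the cost of only one extra component'' is exactly the missing statement, and you give no reason it should hold. The hypothesis $k\ge 2r$ is never used concretely, and the fallback paragraph only restates the goal. (Also, the $k$ components through $v$ miss $v$'s own part, so they do not by themselves give the bound $k$.)

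The missing idea is a way to turn ``no $t+1$ components cover $V(H)$'' into rigid structure. The lever is that every transversal $r$-set is an edge, so it must lie in a common component of some color. The paper records each vertex as a vector $\vec v$ of component indices and derives from non-coverability that:
\begin{itemize}
\item for any $t+1$ coordinates and prescribed values, some vertex avoids them all (Claim~\ref{t+1diff});
\item each color has at least $t+2$ components meeting each part (Claim~\ref{samepartdiffcolor});
\item vertices in different parts are at Hamming distance at most $t+1$ (Claim~\ref{smalldist});
\item some such pair is at distance at least $r$ (Claim~\ref{distr}).
\end{itemize}
For $r=3$ it shows every transversal triple has at most one distinguishing color, then chases two coordinates to a contradiction. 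For $r\ge 4$ it constructs $b_1,\dots,b_r,d,d',d''$ and uses edges among them, each forced to agree in some coordinate, to pin down entries until $\beta=1$ contradicts $\beta\ne 1$. Without an argument of this type, or a genuinely controlled merging lemma, your proposal does not establish the theorem.
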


The lower bound $\cov(r,r+t) \geq t+1$ arises from a nontrivial construction in~\cite{GK}. We prove the upper bound. 

\medskip 

Conjecture~\ref{GKconj} is not true for $r=2$, i.e., bicliques. For instance, let $k = r+t = t+2$ and consider the biclique $X \cup Y$ with $X = \{x_1, \ldots, x_k\}$ and $Y = \{y_1, \ldots, y_k\}$ such that edge $x_iy_j$ receives color $j-i \pmod k$. In this spanning $k$-coloring, every color class is a perfect matching, hence every monochromatic component has exactly one edge. Then $k > t+1$ monochromatic components are needed to cover all vertices.

We note that the only known construction for the lower bound $2k-2$ in Conjecture~\ref{bicliqueconj} is a {\em non-spanning} coloring. The perfect matching construction mentioned above gives a spanning coloring implying $\cov(2,k) \geq k$. In~\cite{GK}, it is stated that Kir\'aly conjectures $\cov(2,k)$ should be at least $2k-4\sqrt{k}$, although we are not aware of any explicit constructions matching this bound. We prove that $\cov(2,k) = k$ for $k \in \{2,3\}$ using results from~\cite{CFGLT}. The remaining cases are open and likely difficult.
\begin{thm}\label{cov2k}
    For $k \in \{2,3\}$, $\cov(2,k) = k$.
\end{thm}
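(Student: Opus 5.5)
The lower bound $\cov(2,k)\ge k$ comes from the cyclic construction above: color $x_iy_j$ with $j-i \pmod k$. I would check that it is spanning for every $k\ge 2$ and that every color class is a perfect matching, so every monochromatic component is a single edge. Each component then covers only two of the $2k$ vertices, and at least $k$ components are needed. So the work is entirely in the upper bounds $\cov(2,2)\le 2$ and $\cov(2,3)\le 3$.

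For $k=2$, I would invoke the result of Chen, Fujita, Gy\'arf\'as, Lehel and T\'oth~\cite{CFGLT} that Conjecture~\ref{bicliqueconj} holds for $r\le 5$. With two colors this gives a cover by at most $2\cdot 2-2=2$ monochromatic components for every $2$-coloring, spanning or not, so $\cov(2,2)\le 2$.

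For $k=3$, the same theorem only gives $4$, so the spanning hypothesis has to be used to save one component. My plan is as follows.
\begin{enumerate}
\item Fix a spanning $3$-coloring of the biclique on $X\cup Y$, and pick a vertex $x\in X$. Let $C_1,C_2,C_3$ be the monochromatic components containing $x$, of colors $1,2,3$. Every $y\in Y$ is joined to $x$ by an edge of some color, so $C_1\cup C_2\cup C_3\supseteq Y$. Let $X'$ be the set of vertices of $X$ not covered by these three components.
\item If $X'=\varnothing$, we are done with three components. Otherwise, use the key observation: for $x'\in X'$ and $y\in Y\cap C_i$, the edge $x'y$ cannot have color $i$, or else $x'$ would lie in $C_i$.
\item Write $Y_i=Y\cap C_i$. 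Since $x'$ is spanning, it sees all three colors. In particular it sees color $i$ on some edge into $Y\setminus Y_i$, so the sets $Y\setminus Y_i$ are nonempty and carry a structured pattern of colors from $X'$.
\item Exploit this rigidity by choosing $x$ cleverly, e.g.\ so that $|X'|$ is minimal or so that some $C_i$ is as large as possible, and then exchange components. The aim is to show that either some component of $x$ already contains all of $Y$, or two components of $x$ together with one component of some $x'\in X'$ (or of a suitably chosen $y$) cover everything. A further useful tool is to merge colors $2$ and $3$ and apply the known structure of $2$-colored bicliques: either one color class is spanning and connected, or the graph splits into the standard four-block pattern.
\end{enumerate}

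I expect the main obstacle to be this last exchange step: producing exactly three components in every configuration, rather than four. The plan is a case analysis on how $X'$ distributes its colors among $Y_1,Y_2,Y_3$ and their overlaps, using the CFGLT structural lemmas for the degenerate cases.
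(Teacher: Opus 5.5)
Your lower bound and your $k=2$ case are fine and agree with the paper; for $k=2$ one has $2k-2=k$, so the CFGLT bound for arbitrary $2$-colorings suffices. The genuine gap is the upper bound $\cov(2,3)\le 3$, which is the whole content of the theorem. Steps 1--3 are preliminary observations, and step 4 states a goal rather than giving an argument.

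Moreover, the goal as you state it is false. Take the spanning cyclic $3$-coloring of $K_{3,3}$ from your own lower bound. Every monochromatic component is a single edge, so the only covers by three components are perfect matchings, i.e.\ three pairwise disjoint edges. Consequently no component contains $Y$. Also, any three components that include two components through a common vertex $x$ cover at most $5$ of the $6$ vertices. Here $X'=X\setminus\{x\}$ for every choice of $x$, so no extremal choice of $x$ (minimal $|X'|$, largest $C_i$) rescues the exchange step. Merging colors $2$ and $3$ does not help by itself either, since components of the merged color are not monochromatic in the original coloring.

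What is missing is the structural dichotomy the paper uses. If every color class is a disjoint union of bicliques, Corollary~1 of \cite{CFGLT} (Theorem~\ref{cfglt2}) gives a cover by at most three components, all of one color. This is exactly the regime, containing the example above, in which your single-vertex strategy breaks down.

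Otherwise there are $u\in X$ and $v\in Y$ in the same color-$1$ component with $uv$ of color $2$, so $G_1(u)=G_1(v)$ and $G_2(u)=G_2(v)$. The components of $u$ cover $Y$ and those of $v$ cover $X$, so the four components $G_1(u),G_2(u),G_3(u),G_3(v)$ already cover $V(G)$, and only one needs to be saved. The vertices not in $G_1(u)\cup G_2(u)$ lie in $N_3(u)\cup N_3(v)$, so the paper studies the biclique $G[N_3(u),N_3(v)]$, whose sides are nonempty by spanning:
\begin{itemize}
\item A color-$3$ edge there forces $G_3(u)=G_3(v)$, giving a $3$-cover.
\item Otherwise this biclique is $2$-colored, hence covered by two monochromatic components $C,D$ (the $k=2$ case). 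A case analysis on the colors of $C$ and $D$ then shows one of two outcomes. Either $G_1(u)$, $G_2(u)$ and one further component cover $V(G)$. Or $Y\subseteq G_3(u)\cup G_3(v)$; then every $x\in X$, having a color-$3$ neighbor, is covered by these two color-$3$ components as well.
\end{itemize}
Your proposal contains neither the dichotomy nor this reduction from four components to three, so as written it does not prove $\cov(2,3)\le 3$.
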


\begin{quest}
Determine $\cov(2,k)$ for $k \geq 4$.
\end{quest}

\medskip
{\bf Notation.} In our proofs, we consider a complete $r$-partite $r$-uniform hypergraph $H$ with parts $V_1, V_2, \ldots, V_r$ and an arbitrary spanning $(r+t)$-coloring with colors $c_1, c_2, \ldots, c_{r+t}$. The subhypergraph of edges of color $c_i$ partitions the vertices into some number of connected components. Choose an arbitrary ordering of the components, and let $H_{c_i, j}$ denote the $j$th connected component in color $c_i$. 

We associate to each vertex $v \in V(H)$ a vector $\vec{v} = (a_1, a_2, \ldots, a_{r+t}) \in [n]^{r+t}$ such that $v$ is contained in the $a_i$th component of color $c_i$, $H_{c_i, a_i}$, for all $1 \leq i \leq r+t$. We denote by $\vec v(i)$ the $i$th coordinate of $\vec v$. For a pair of  vertices $u$ and $v$, we denote by $\delta(u,v)$ the Hamming distance of their vectors $\vec{u}$ and $\vec{v}$, i.e., the number of colors in which $u$ and $v$ belong to different components.

\section{Proof of Theorem~\ref{main}}
Suppose that for some $r$-uniform, complete $r$-partite hypergraph $H$, there exists a spanning $(r+t)$-coloring in which $V(H)$ cannot be covered by $t+1$ monochromatic components. 

We will use the following claims, the first four of which were proven or used implicitly in~\cite{GK}. For completeness, we include their short proofs.

\begin{claim}\label{rsame} Let $v_1, \ldots, v_r$ be vertices with $v_i \in V_i$ for all $i \in [r]$. Then there exists a color $c_j$ in which vertices $v_1, \ldots, v_r$ are all contained in the same component.  That is, $\vec v_i(j) = \vec v_k(j)$ for all $i,k \in [r]$. 

\end{claim}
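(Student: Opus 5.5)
The claim follows directly from the definition of the complete $r$-partite hypergraph. Since $v_i \in V_i$ for every $i \in [r]$, the $r$-set $e = \{v_1, \ldots, v_r\}$ contains exactly one vertex from each part. It is therefore an edge of $H$.

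Every edge of $H$ receives some color under the given $(r+t)$-coloring, so $e$ has some color $c_j$. Consider the subhypergraph of edges of color $c_j$. In it, any two vertices $v_i, v_k$ of $e$ are joined by the one-edge sequence consisting of $e$ itself: its first edge contains $v_i$ and its last edge contains $v_k$. By the definition of connected component, $v_i$ and $v_k$ therefore lie in the same component of color $c_j$.

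In the vector notation, each $v_i$ lies in the component $H_{c_j, \vec v_i(j)}$, and all these components coincide. Since the components of a fixed color partition the vertex set, this gives $\vec v_i(j) = \vec v_k(j)$ for all $i,k \in [r]$.

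There is no real obstacle here. The only point to check is that the components of a fixed color are well defined and partition $V(H)$, including vertices that meet no edge of that color. This holds because the definition allows an empty connecting sequence, so every vertex lies in exactly one component of each color. Spanningness and the hypothesis on $t$ are not needed for this claim.
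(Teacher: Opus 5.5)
Your proof is correct and follows the paper's own one-line argument: $\{v_1,\ldots,v_r\}$ is an edge of the complete $r$-partite hypergraph, so it receives some color $c_j$, which places all $v_i$ in a single component of color $c_j$. Your extra remarks (that components of each color partition $V(H)$, and that spanningness and the hypothesis on $t$ are not used) are accurate, though not needed.
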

\begin{proof}
    Since $H$ is complete $r$-partite, $\{v_1, \ldots, v_r\} \in E(H)$ and it receives some color $c_j$.
\end{proof}

\begin{claim}\label{t+1diff}For any set of $t+1$ indices $j_1, j_2, \ldots, j_{t+1} \in [r+t]$ and any vector $(a_1, \ldots, a_{t+1}) \in [n]^{t+1}$, there exists a vertex $v \in V(H)$ such that $\vec v(j_i) \neq a_i$ for all $i \in [t+1]$. 
\end{claim}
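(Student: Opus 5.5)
This follows directly from the standing assumption that $V(H)$ cannot be covered by $t+1$ monochromatic components. We argue by contradiction. Suppose that for some indices $j_1, \ldots, j_{t+1}$ and some vector $(a_1, \ldots, a_{t+1}) \in [n]^{t+1}$, every vertex $v \in V(H)$ satisfies $\vec v(j_i) = a_i$ for at least one $i \in [t+1]$.

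Consider the family of components $H_{c_{j_1}, a_1}, H_{c_{j_2}, a_2}, \ldots, H_{c_{j_{t+1}}, a_{t+1}}$. By the definition of the vector $\vec v$, the equality $\vec v(j_i) = a_i$ says exactly that $v$ lies in the $a_i$th component of color $c_{j_i}$. So the assumption means that every vertex of $H$ lies in at least one of these $t+1$ monochromatic components. This family therefore covers $V(H)$ with at most $t+1$ monochromatic components, contradicting the standing assumption. Hence some vertex $v$ has $\vec v(j_i) \neq a_i$ for all $i \in [t+1]$.

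Two points of bookkeeping need care. First, a component index $a_i$ might exceed the number of components of color $c_{j_i}$, so that $H_{c_{j_i}, a_i}$ does not exist. In that case no vertex has $\vec v(j_i) = a_i$, and we simply drop that index; the remaining family still covers $V(H)$ and has fewer than $t+1$ members. Second, the indices $j_i$ might repeat. This only shrinks the family, and the contradiction is unaffected. Apart from these remarks the proof is immediate, so I expect no real obstacle. Note that the spanning property is not used here; it only matters for the later claims.
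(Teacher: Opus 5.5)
Your proof is correct and is the same argument as the paper's: assume every vertex has $\vec v(j_i)=a_i$ for some $i$, so the components $H_{c_{j_1},a_1},\ldots,H_{c_{j_{t+1}},a_{t+1}}$ cover $V(H)$, which contradicts the standing assumption. Your remarks about nonexistent component indices and repeated indices are harmless extra care that the paper leaves implicit. You are also right that the spanning property is not used here.
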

\begin{proof}
    If there exists a set of $t+1$ indices and a vector $(a_1, \ldots, a_{t+1})$ such that for every vertex $v\in V(H)$, we have $\vec{v}(j_i) = a_i$ for some $i \in [t+1]$, then $V(H)$ is covered by at most $t+1$ monochromatic components: $H_{c_{j_1}, a_1},\ldots, H_{c_{j_{t+1}}, a_{t+1}}$, a contradiction.
\end{proof}

\begin{claim}\label{samepartdiffcolor}
    For any color $c_j$, and any part $V_i$, there exists at least $t+2$ vertices $v_1, \ldots, v_{t+2} \in V_i$ each in distinct components of color $c_j$. That is, $\vec v_i(j) \neq \vec v_k(j)$ for all distinct $i, k \in [t+2]$. 
\end{claim}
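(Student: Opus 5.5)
Fix a color $c_j$ and a part $V_i$, and argue by contradiction. Suppose the vertices of $V_i$ meet at most $t+1$ distinct components of color $c_j$; call them $H_{c_j,a_1},\dots,H_{c_j,a_s}$ with $s\le t+1$. Our goal is to produce at most $t+1$ monochromatic components covering all of $V(H)$, which contradicts the standing assumption. The natural tool is Claim~\ref{t+1diff}, applied with indices chosen so that every vertex of $V(H)$ is forced to agree with one of the prescribed coordinates.

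First I handle $s=1$ directly. If all of $V_i$ lies in one component $C=H_{c_j,a}$, then every vertex $u\notin V_i$ sees color $c_j$ because the coloring is spanning. So $u$ lies in some edge of color $c_j$, and that edge contains a vertex of $V_i$, since every edge meets every part. Hence $u$ lies in $C$ as well. Thus $C$ alone covers $V(H)$, which is a contradiction since $1\le t+1$.

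For general $s\le t+1$ the same reasoning still helps. Take any vertex $u$. Since $u$ sees color $c_j$, it lies in an edge of color $c_j$, and that edge meets $V_i$. Therefore $u$ belongs to a component of color $c_j$ that contains a vertex of $V_i$, which is one of $H_{c_j,a_1},\dots,H_{c_j,a_s}$. So these $s\le t+1$ components cover $V(H)$, again a contradiction. In terms of Claim~\ref{t+1diff}, take the index set $\{j\}$ padded arbitrarily. Every vertex satisfies $\vec u(j)\in\{a_1,\dots,a_s\}$, but Claim~\ref{t+1diff} in the form stated needs distinct indices, so the cleaner route is simply to cite the covering directly. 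It follows that $V_i$ meets at least $t+2$ components of color $c_j$, and picking one vertex of $V_i$ from each of $t+2$ of them gives the required $v_1,\dots,v_{t+2}$.

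There is no serious obstacle here. The only point needing care is the step "$u$ sees $c_j$ $\Rightarrow$ $u$'s $c_j$-component meets $V_i$". It uses both spanning-ness and the fact that every edge of a complete $r$-partite $r$-uniform hypergraph has a vertex in each part. That includes the case $u\in V_i$, which is trivial.
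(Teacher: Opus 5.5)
Your proof is correct and is essentially the paper's argument: if $V_i$ meets at most $t+1$ components of color $c_j$, then spanning-ness together with the fact that every edge meets $V_i$ puts every vertex into one of those components, so they cover $V(H)$ with at most $t+1$ monochromatic components, a contradiction. The separate $s=1$ case and the detour through Claim~\ref{t+1diff} are unnecessary, since the direct covering argument already handles every $s\le t+1$.
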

\begin{proof}
    Suppose for contradiction that $V_i$ is covered by at most $t+1$ components of color $c_j$. Let $\mathcal{C}$ be the set of these components. Fix $v \in V(H)-V_i$. Because the $(r+t)$-coloring of $E(H)$ is spanning and $H$ is $r$-partite, there is an edge in color $c_j$ containing both $v$ and a vertex $v_i$ of $V_i$. Thus, $\vec{v}_i(j) = \vec{v}(j)$, and therefore $v \in \mathcal{C}$. Hence, $V(H)$ is covered by $\mathcal{C}$, a contradiction.
\end{proof}

\begin{claim}\label{smalldist}
    If $u, v \in V(H)$ belong to different parts, then $\delta(u,v) \leq t+1$. 
\end{claim}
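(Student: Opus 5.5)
Suppose $u \in V_a$ and $v \in V_b$ with $a \neq b$, and assume for contradiction that $\delta(u,v) \geq t+2$. Then the set $S$ of colors in which $u$ and $v$ lie in the same component satisfies $|S| = (r+t) - \delta(u,v) \leq r-2$. The plan is to find vertices $w_i \in V_i$, one for each $i \in [r]\setminus\{a,b\}$, that together with $u$ and $v$ form an $r$-set $\{u, v, w_i\}$ for which Claim~\ref{rsame} cannot hold. That is, we want no color $c_j$ in which all these $r$ vertices share a component. Any color where all of them agree must lie in $S$, since $u$ and $v$ already agree there. So it suffices to choose the $w_i$ so that every color $j \in S$ is "killed" by some $w_i$, meaning $\vec w_i(j) \neq \vec u(j)$.

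There are $r-2$ parts other than $V_a, V_b$ and at most $r-2$ colors in $S$. We therefore match the colors of $S$ injectively to distinct parts $V_i$ with $i \notin \{a,b\}$; parts left unmatched get an arbitrary vertex. For a color $j \in S$ matched to part $V_i$, we use Claim~\ref{samepartdiffcolor}, which gives vertices of $V_i$ lying in at least $t+2 \geq 2$ distinct components of color $c_j$. At least one of them, call it $w_i$, satisfies $\vec w_i(j) \neq \vec u(j)$.

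With these choices, $\{u,v\} \cup \{w_i\}$ contains exactly one vertex from each part. For every color $j$, either $j \notin S$, in which case $u$ and $v$ already differ, or $j \in S$, in which case the matched $w_i$ differs from $u$ in coordinate $j$. So no color has all $r$ vertices in one component, contradicting Claim~\ref{rsame}. Hence $\delta(u,v) \leq t+1$.

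The only point needing care is the counting: we must check that $|S| \leq r-2$ exactly matches the number of available parts, which is why the hypothesis that $u$ and $v$ are in different parts is essential. If $r=2$, then $S$ is empty and the edge $uv$ itself already gives the contradiction. Beyond this bookkeeping there is no real obstacle; the argument is a direct pigeonhole combination of Claims~\ref{rsame} and~\ref{samepartdiffcolor}.
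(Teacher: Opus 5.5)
Your proof is correct and is essentially the paper's argument. The paper relabels so that the at most $r-2$ colors in which $u$ and $v$ agree lie among $c_1,\dots,c_{r-2}$ and the remaining parts are $V_1,\dots,V_{r-2}$. It then uses Claim~\ref{samepartdiffcolor} to pick $v_i\in V_i$ with $\vec v_i(i)\neq \vec u(i)$ and contradicts Claim~\ref{rsame}; this is exactly your injective matching of $S$ to the other $r-2$ parts, written with an explicit labeling.
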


\begin{proof}
    Assume for contradiction that there exists $u,v \in V(H)$ in different parts such that $\delta(u,v) \ge t+2$. By relabeling parts, colors, and components, we may assume that $u \in V_r$, $v \in V_{r-1}$, $\vec{u} = (1,1,\ldots,1)$, and $\vec{v} = (1,1,\ldots,1,2,\ldots,2)$, where at most $r-2$ entries of $\vec{v}$ are $1$. By Claim~\ref{samepartdiffcolor}, for each $i \in [r-2]$, there exists $v_i \in V_i$ such that $\vec{v_i}(i) \ne 1$. Note that $\{v_i:i \in [r-2]\}\cup\{u,v\}$ is a set of $r$ vertices from pairwise distinct parts. We have $\vec{v_i}(i) \ne \vec{u}(i)$ for each $i \in [r-2]$ and $\vec{v}(j) \ne \vec{u}(j)$ for all $j > r-2$. This contradicts Claim~\ref{rsame}.
\end{proof}

\begin{claim}\label{distr}There exists vertices $u, v \in V(H)$ belonging to different parts such that $\delta(u,v) \geq r$.
\end{claim}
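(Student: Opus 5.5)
The idea is to argue by contradiction. Assuming every pair of vertices in different parts is close, two such vertices must agree on many colors. Claim~\ref{t+1diff} then supplies a vertex far from both of them, and it lies in a different part from at least one. We work under the standing assumptions of this section: $t \ge r$ (in fact $t \ge r-1$ suffices), and $V(H)$ cannot be covered by $t+1$ monochromatic components.

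\emph{Step 1 (setup).} Suppose, for contradiction, that $\delta(u,v) \le r-1$ for every pair $u,v$ lying in different parts. Fix arbitrary vertices $u \in V_1$ and $w \in V_2$. Let $D \subseteq [r+t]$ be the set of indices where $\vec u$ and $\vec w$ differ, and let $A = [r+t]\setminus D$ be the set where they agree. By assumption $|D| = \delta(u,w) \le r-1$, so $|A| \ge t+1$.

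\emph{Step 2 (apply Claim~\ref{t+1diff}).} Choose any $t+1$ indices $j_1,\dots,j_{t+1} \in A$, and set $a_i = \vec u(j_i) = \vec w(j_i)$. By Claim~\ref{t+1diff} there is a vertex $v$ with $\vec v(j_i) \ne a_i$ for all $i \in [t+1]$. Hence $v$ differs from $u$ in all $t+1$ of these coordinates, and likewise from $w$. This gives $\delta(u,v) \ge t+1$ and $\delta(w,v) \ge t+1$.

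\emph{Step 3 (conclude).} Since $u$ and $w$ lie in different parts, $v$ lies in a part different from at least one of them. Call that vertex $x \in \{u,w\}$. Then $x$ and $v$ are in different parts and $\delta(x,v) \ge t+1 \ge r$. This contradicts the assumption in Step 1, so the desired pair exists.

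The only step that needs care is Step 2. Applying Claim~\ref{t+1diff} relative to a single vertex $u$ could return a $v$ in the same part as $u$, and a naive triangle inequality through a third vertex only gives a bound of about $2(r-1)$, which is too weak when $t$ is large. Choosing the $t+1$ indices from the agreement set of two vertices in different parts removes this issue, because $v$ is then far from both simultaneously.
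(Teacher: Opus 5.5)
Your proof is correct and is essentially the paper's argument: both take two vertices from different parts that agree on at least $t+1$ coordinates, and apply Claim~\ref{t+1diff} on those coordinates to get a vertex far from both. The paper runs this through a case split (first asking whether some $b_J$ far from $a$ lies outside $V_1$), whereas you fold the cases into one step by noting that the new vertex lies outside the part of at least one of $u,w$. That is a slightly cleaner packaging of the same idea.
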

\begin{proof}
    Again, by relabeling colors and components, we may assume that there exists a vertex $a \in V_1$ `with $\vec a = (1,1, \ldots, 1)$. By Claim~\ref{t+1diff}, for any set of indices $J \subseteq [r+t]$ with $|J| = t+1$, we can find a vertex $b_J$ for which $\vec b_J(j_i) \neq 1$ for all $j_i \in J$. (It is possible that some of the $b_J$'s are the same vertices.) Then $\delta(a, b_J) \geq t+1 > r$. If some $b_J \notin V_1$, then $a$ and $b_J$ satisfy the claim and we are done. So suppose all such $b_J$ belong to $V_1$.

    Similarly, if any $b \in V_2$ satisfies $\delta(a,b) \geq r$, then we are done. So we may assume that each $b \in V_2$ contains at most $r-1$ coordinates not equal to $1$ in $\vec b$ and hence there exists a set $J$ of $r+t - (r-1) = t+1$ indices in which $\vec b$ is equal to $1$. Fix such a $\vec{b}$ and $J$. We obtain $\delta(b, b_J) \geq t+1$ by construction, and $b_J \in V_1$. Thus $b$ and $b_J$ satisfy the claim. 
\end{proof}

\subsection{The case $r=3$}

In this section we consider the case $t\geq r=3$. We color $H$ with $t+3$ colors.

For any three vertices $a,b,c$, we say a color $c_j$ is {\em $abc$-distinguishing} if $a$, $b$, and $c$ belong to distinct components in color $c_j$. Equivalently, $\vec a(j), \vec b(j)$, and $\vec c(j)$ are distinct.

\begin{claim}\label{distinguishing}
    For any $a \in V_1, b \in V_2, c \in V_3$, there is at most one $abc$-distinguishing color.
\end{claim}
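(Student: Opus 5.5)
The plan is to argue by contradiction with Claim~\ref{t+1diff}, choosing the forbidden vector so that the resulting vertex $v$ produces a triple violating Claim~\ref{rsame} no matter which part $v$ lies in. The key fact is that among the $t+3$ colors, if two are $abc$-distinguishing then only $t+1$ colors remain. Claim~\ref{t+1diff} lets us forbid one value in each of exactly $t+1$ colors.

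Suppose $a \in V_1$, $b \in V_2$, $c \in V_3$, and that two distinct colors $c_{j_1}, c_{j_2}$ are both $abc$-distinguishing. Let $J = [t+3]\setminus\{j_1,j_2\}$, so $|J| = t+1$. For each $j \in J$ we define a forbidden value $\alpha_j$ as follows:
\begin{itemize}
\item if at least two of $\vec a(j), \vec b(j), \vec c(j)$ coincide, let $\alpha_j$ be that common value (it is unique, since three values cannot contain two different coinciding pairs without all being equal);
\item otherwise let $\alpha_j$ be arbitrary.
\end{itemize}
Claim~\ref{t+1diff} then gives a vertex $v$ with $\vec v(j) \neq \alpha_j$ for all $j \in J$.

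We now check the three possible parts of $v$. In each case we replace the member of $\{a,b,c\}$ lying in $v$'s part by $v$. This yields $r=3$ vertices from distinct parts, and by Claim~\ref{rsame} they share a component in some color $c_j$.
\begin{itemize}
\item \emph{$j \in \{j_1,j_2\}$ is impossible.} The two vertices of $\{a,b,c\}$ kept in the triple take distinct values there, because $c_{j_1}$ and $c_{j_2}$ are $abc$-distinguishing.
\item \emph{$j \in J$ is impossible.} The two kept vertices agree in color $j$, so their common value is exactly $\alpha_j$. The triple then requires $\vec v(j) = \alpha_j$, contradicting the choice of $v$.
\end{itemize}
For instance, if $v \in V_1$ we use the triple $v,b,c$. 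A common color $j$ needs $\vec b(j) = \vec c(j) = \vec v(j)$. This fails on $j_1, j_2$, and on $J$ it forces $\vec v(j) = \alpha_j$. The cases $v \in V_2$ (triple $a,v,c$) and $v \in V_3$ (triple $a,b,v$) are identical. Hence no color works, contradicting Claim~\ref{rsame}, so at most one color is $abc$-distinguishing.

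The main obstacle is choosing the forbidden vector so that all three placements of $v$ are handled at once. A naive choice such as $\alpha_j = \vec a(j)$ fails when $v \in V_1$. Using the ``majority'' value, which is well defined precisely because at most one value can be shared by two of the three vertices, resolves every case uniformly. The rest is routine verification.
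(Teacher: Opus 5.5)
Your proof is correct and is essentially the paper's argument. The paper directly exhibits the at most $t+1$ components in the non-distinguishing colors that contain at least two of $a,b,c$ (your $H_{c_j,\alpha_j}$), and shows they cover $V(H)$ because any vertex $v$ lies in an edge with the two of $a,b,c$ outside its part; you run the same reasoning in contrapositive form through Claim~\ref{t+1diff}.
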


\begin{proof}
    Suppose that for some $a\in V_1, b \in V_2, c \in V_3$, there are at least $2$ $abc$-distinguishing colors. Define \[I_{ab} = \{j: \vec a(j) = \vec b(j)\}.\] Similarly define $I_{bc}$ and  $I_{ac}$.
    For each $j \in I_{ab}$,  $a$ and $b$ belong to the same component of color $c_j$, $H_{c_j,\vec a(j)}$. Moreover, if $j$ belongs to two sets of $I_{ab}, I_{bc}, I_{ac}$, then $a,b,c$ all belong to the component $H_{c_j,\vec a(j)}$. Define
    \[\mathcal C_{ab} = \{H_{c_j, \vec a(j)}: j \in I_{ab}\}\]to be the collection of monochromatic components containing both $a$ and $b$. We similarly define $\mathcal C_{bc}$ and $\mathcal C_{ac}$. Then
    \[|\mathcal C_{ab} \cup \mathcal C_{bc} \cup \mathcal C_{ac}| = |I_{ab} \cup I_{bc} \cup I_{ac}| = t+3 - |\{j: c_j \text{ is } abc\text{-distinguishing}\}| \leq t+1.\]

    We will show that we can cover $V(H)$ with these at most $t+1$ monochromatic components. Consider any $v \in V_1$. The vectors $\vec v, \vec b, \vec c$ must all agree in some coordinate $j$. By definition, $j \in I_{bc}$, and thus $V_1 \subseteq \mathcal C_{bc}$. Similarly, $V_2 \subseteq \mathcal C_{ac}$ and $V_3 \subseteq \mathcal C_{ab}$. Since $V(H) = V_1 \cup V_2 \cup V_3$, the proof is complete.
\end{proof}
We are now ready to prove Theorem~\ref{main} for $r = 3$. Throughout the proof, one may refer to Figure~\ref{fig:r=3}.

\begin{proof}[Proof of Theorem~\ref{main} for $r=3$]
    By Claim~\ref{distr}, we may relabel parts of $V(H)$ such that there exists $v_1 \in V_1$ and $v_2\in V_2$ in which $\delta(v_1,v_2) \ge 3$. We may assume that $\vec{v_1}(1) = \vec{v_1}(2) = 1$, while $\vec{v_2}(1) = \vec{v_2}(2) = 2$. Because $t+1 \ge r+1 = 4$, it suffices to show that every vertex of $V(H)$ is covered by the union of the four components $H_{c_1,1}$, $H_{c_1,2}$, $H_{c_2,1}$, and $H_{c_2,2}$. Assume for contradiction that there is some $u \in V(H)$ in which $\vec{u}(1) = \alpha_1 \not \in \{1,2\}$ and $\vec{u}(2) = \alpha_2 \not \in \{1,2\}$. By Claim~\ref{distinguishing}, $u \not \in V_3$, otherwise, colors $c_1$ and $c_2$ would be $v_1v_2u$-distinguishing. Without loss of generality, assume $u \in V_1$. The proof is symmetric if $u \in V_2$.  

    By Claim~\ref{samepartdiffcolor}, there exists $v_3 \in V_3$ such that $\vec{v_3}(1) = \beta_1 \not \in \{1,2,\alpha_1\}$. Note that color $c_1$ is $v_1v_2v_3$-distinguishing. By Claim~\ref{distinguishing}, because color $c_2$ is not also $v_1v_2v_3$-distinguishing, we have $\vec{v_3}(2) \in \{1,2\}$. Likewise, color $c_1$ is $uv_2v_3$-distinguishing, and so $\vec{v_3}(2) \in \{\alpha_2,2\}$. Hence, $\vec{v_3}(2) = 2$.

    Similarly, by Claim~\ref{samepartdiffcolor}, there exists $v_3' \in V_3$ such that $\vec{v_3'}(2) = \beta_2 \not \in \{1,2,\alpha_2\}$. Because color $c_2$ is $v_1v_2v_3'$-distinguishing and $uv_2v_3'$-distinguishing, by Claim~\ref{distinguishing}, we have $\vec{v_3'}(1) = 2$.

    By the same argument, there exists $w_2 \in V_2$ such that $\vec{w_2}(1) = \gamma_1 \not \in \{1,2,\alpha_1,\beta_1\}$. Color $c_1$ is $v_1w_2v_3$-distinguishing so $\vec w_2(2) \in\{1,2\}$ and $v_1w_2v_3'$-distinguishing so $\vec w_2(2) \in \{1,\beta_2\}$. Thus, $\vec{w_2}(2) = 1$. This contradicts the fact that color $c_1$ is also $uw_2v_3'$-distinguishing, in which $\vec{w_2}(2) \in \{\alpha_2,\beta_2\}$. This completes the proof of the case $r=3$.
    \end{proof}
\begin{figure}
\centering
    \begin{tabular}{l||cc||l}
    &\multicolumn{2}{c||}{Component}&\multirow{2}{*}{Part}\\
    &$c_1$&$c_2$&\\\hline\hline
         $\vec{v_1}$&$1$&$1$&$V_1$  \\
         $\vec{v_2}$&$2$&$2$&$V_2$\\
         $ \vec{u}$&$\alpha_1$&$\alpha_2$&$V_1$\\
         $\vec{v_3}$&$\beta_1$&$2$&$V_3$\\
         $\vec{v_3'}$&$2$&$\beta_2$&$V_3$\\
         $\vec{w_2}$&$\gamma_1$&$1$&$V_2$\\
         
    \end{tabular}\label{fig:r=3}
    \caption{Components for colors $c_1$ and $c_2$ used in the proof of case $r=3$.}
        \end{figure}

\subsection{The case $r \ge 4$}

We now assume $r \ge 4$. Throughout the proof, the reader may refer to Figure~\ref{fig:rbig}.

Let $b_{r-1}$ and $b_r$ be vertices satisfying $\delta(b_{r-1}, b_r) \geq r$ by Claim~\ref{distr}. Without loss of generality, suppose $b_{r-1} \in V_{r-1}$ and $ b_r \in V_r$ with $\vec b_r = (1, 1, 1\ldots, 1)$ and $\vec b_{r-1} = (2,2, \ldots, 2, 1, 1, \ldots, 1)$, where there are $r-1 + q$ $2$s for some $q \geq 1$. Moreover, by Claim~\ref{smalldist}, we have $\delta(b_{r-1}, b_r) \leq t+1 < r+t$, and so $\vec b_{r-1}$ contains at least one 1. 

By Claim~\ref{samepartdiffcolor}, there exists vertices $b_1 \in V_1, b_2 \in V_2, \ldots, b_{r-2} \in V_{r-2}$ such that $\vec b_i(i) \notin \{1,2\}$ for all $i \in [r-2]$. There also exists, by Claim~\ref{t+1diff}, vertices $d$ and $d'$ (possibly $d=d'$) satisfying
\[\vec d(j) \neq \vec b_{1}(j) \text{ for } r\leq j \leq r-1 + q; \qquad \vec d(j) \neq 1 \text{ for } r-1+q < j \leq r+t,\]
\[\vec d'(j) \neq \vec b_{2}(j) \text{ for } r\leq j \leq r-1 + q; \qquad \vec d'(j) \neq 1 \text{ for } r-1+q < j \leq r+t.\]

The vertex $d$ belongs to the part $V_i$ for some $i$ and $d'$ to $V_{i'}$ for some $i'$. We consider the edges $e = \{b_1, \ldots, b_r\} - \{b_i\} \cup \{d\}$ and $e' = \{b_1, \ldots, b_r\} - \{b_{i'}\} \cup \{d'\}$ which receive colors say $c_{j^*}$ and $c_{k^*}$ respectively. Therefore the vectors of the $r$ vertices in $e$ must all agree in the $j^*$th coordinate and similar for $e'$.

\begin{claim} $d, d' \in V_{r-1} \cup V_r$. \end{claim}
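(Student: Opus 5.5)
The plan is to argue by contradiction using only Claim~\ref{rsame} applied to the edge $e$ (and symmetrically to $e'$). Suppose $d \in V_i$ with $i \le r-2$. Then $e = \{b_1, \ldots, b_r\} - \{b_i\} \cup \{d\}$ still contains both $b_{r-1}$ and $b_r$, since only the vertex $b_i$ from a part among $V_1,\dots,V_{r-2}$ was replaced. By Claim~\ref{rsame}, the color $c_{j^*}$ of $e$ is a coordinate on which all $r$ vectors of the vertices of $e$ agree. In particular it must satisfy
\[
\vec b_{r-1}(j^*) = \vec b_r(j^*) = \vec d(j^*).
\]

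I will rule out every possible value of $j^*$ by splitting $[r+t]$ into two ranges.

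\emph{Case $1 \le j^* \le r-1+q$.} By the normalization, $\vec b_r(j^*) = 1$ and $\vec b_{r-1}(j^*) = 2$. So $b_{r-1}$ and $b_r$ are in different components of color $c_{j^*}$, which is a contradiction.

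\emph{Case $r-1+q < j^* \le r+t$.} By the choice of $d$ via Claim~\ref{t+1diff}, $\vec d(j^*) \neq 1 = \vec b_r(j^*)$. So $d$ and $b_r$ disagree, again a contradiction.

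Hence no color is available for $e$, which is impossible since $H$ is complete $r$-partite and every $r$-set with one vertex in each part is an edge. Therefore $d \in V_{r-1} \cup V_r$.

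The argument for $d'$ is identical. The only properties of $d'$ used are $\vec d'(j) \neq 1$ for $j > r-1+q$, and the fact that $e'$ retains $b_{r-1}$ and $b_r$ whenever $i' \le r-2$. The conditions involving $\vec b_1$ and $\vec b_2$ in the ranges $r \le j \le r-1+q$ play no role here, because on that range $b_{r-1}$ and $b_r$ already disagree. I do not expect a real obstacle. The one point to check is that the indices with $\vec b_{r-1}(j)=2$ are exactly $1,\dots,r-1+q$, so the two cases cover $[r+t]$ with no gap.
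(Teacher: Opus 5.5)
Your proof is correct and matches the paper's argument: if $d \in V_i$ with $i \le r-2$, the edge $e$ still contains $b_{r-1}$, $b_r$ and $d$. On coordinates $1,\dots,r-1+q$ the vertices $b_{r-1}$ and $b_r$ disagree, and on coordinates $r+q,\dots,r+t$ the vertices $d$ and $b_r$ disagree, so $e$ can have no color; the same reasoning applies verbatim to $d'$.
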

\begin{proof}
If $d \in V_i$ where $i \in [r-2]$, then $e$ contains $b_{r-1}, b_r,$ and $d$. The vector $\vec d$ differs from $\vec b_r$ after the $(r-1 + q)$th coordinate, and $\vec b_{r-1}$ and $\vec b_r$ differ in coordinates $1$ through $r-1+q$, a contradiction. Thus coordinate $j^*$ cannot exist. Similar holds for $d'$.
\end{proof}

Thus $i, i' \in \{r, r-1\}$. Let $b_q$ be the unique vertex in $\{b_{r-1}, b_r\}-\{b_i\}$. We consider the $r$ vectors corresponding to vertices in edge $e$. Note that $b_q \in e$.

Vectors $\vec b_{1}$ and $\vec d$ differ in coordinates $r$ through $r-1 + q$, and $\vec d$ and $\vec b_{q}$ differ after coordinate $r-1 + q$. Moreover, for any $j \in [r-2]$, $b_j$ differs from $\vec b_q$ in coordinate $j$. Thus the only coordinate in which the vectors may all agree is $j^*= r-1$. We have $\vec b_1(r-1) = \ldots =\vec b_{r-2}(r-1) = \vec b_q(r-1) = \vec d(r-1)$. By a symmetric argument, $\vec b_1(r-1) = \ldots =\vec b_{r-2}(r-1) = \vec d'(r-1)$. This implies $\vec d(r-1) = \vec d'(r-1) =: \alpha$ where $\alpha = \vec b_q(r-1) = 1$ if $i = r-1$ and $\vec b_q(r-1) = 2$ if $i = r$, and $i = i'$.

Recall that we chose the vertex $b_j$ with $j \in [r-2]$ such that $\vec b_j(j) \notin \{1,2\}$. We will show that besides coordinate $j$, for $j \in [r-2]$, the first $r-1$ elements of $\vec b_j$ must each equal either 1 or 2. 

\begin{claim}\label{all2s}
For every $j \in [r-2]$ and $k \in [r-1]$ with $k \neq j$, we have $\vec b_j(k) =\alpha$.  Moreover, $\vec d(2)=\vec d(3) = \ldots  = \vec d(r-1) = \alpha$.
\end{claim}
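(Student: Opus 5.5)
The plan is to build new edges, each a small modification of $e$ or $e'$, in which every coordinate but one is ruled out. Claim~\ref{rsame} then forces all vertices of the edge to agree in that remaining coordinate.

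First I record what is already known. We have $i=i'\in\{r-1,r\}$, and $b_q$ is the vertex of $\{b_{r-1},b_r\}$ not in $V_i$. Since $\vec b_r$ is all $1$s and $\vec b_{r-1}$ equals $2$ in coordinates $1,\dots,r-1+q$, we get $\vec b_q(m)=\alpha$ for all $m\le r-1+q$, and $\vec b_q(m)=1$ for $m>r-1+q$. We also already have $\vec b_j(r-1)=\vec d(r-1)=\vec d'(r-1)=\alpha$ for all $j\in[r-2]$. So the case $k=r-1$ of the first statement and the coordinate $r-1$ of the second statement are done. It remains to treat $k\in[r-2]$.

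Fix $k\in[r-2]$. By Claim~\ref{samepartdiffcolor} applied to color $c_{r-1}$ and part $V_k$, choose $y\in V_k$ with $\vec y(r-1)\neq\alpha$. If $k\neq 1$, consider the edge
\[f_k=\{b_m: m\in[r-2],\, m\neq k\}\cup\{y,b_q,d\},\]
which has one vertex in each part because $d\in V_i$ and $b_q$ covers the other part of $\{V_{r-1},V_r\}$. I will rule out every coordinate except $k$:
\begin{itemize}
\item Coordinates $m\in[r-2]\setminus\{k\}$: $\vec b_m(m)\notin\{1,2\}$, while $\vec b_q(m)=\alpha\in\{1,2\}$.
\item Coordinate $r-1$: $\vec y(r-1)\neq\alpha=\vec b_q(r-1)$.
\item Coordinates $r,\dots,r-1+q$: $\vec d$ differs from $\vec b_1$, and $b_1\in f_k$ since $k\neq1$.
\item Coordinates $>r-1+q$: $\vec d(m)\neq 1=\vec b_q(m)$.
\end{itemize}
By Claim~\ref{rsame}, the color of $f_k$ must therefore be $c_k$, so all vertices of $f_k$ agree in coordinate $k$ with $\vec b_q(k)=\alpha$. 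This gives $\vec b_j(k)=\alpha$ for all $j\in[r-2]\setminus\{k\}$, and also $\vec d(k)=\alpha$.

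If $k=1$, I run the same argument with $d'$ and $b_2$ in place of $d$ and $b_1$. This is where $r\ge4$ is needed, so that $2\in[r-2]$ and $b_2\neq b_k$. Since $d'\in V_{i'}=V_i$, the set $f_1'=\{b_m: 2\le m\le r-2\}\cup\{y,b_q,d'\}$ is again an edge. Coordinates $r,\dots,r-1+q$ are now killed by $\vec d'$ versus $\vec b_2$, and all other exclusions are identical. This yields $\vec b_j(1)=\alpha$ for all $j\in\{2,\dots,r-2\}$.

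Combining the cases $k\in\{2,\dots,r-2\}$ (via $d$), $k=1$ (via $d'$) and $k=r-1$ (known) proves the first assertion. The values $\vec d(k)=\alpha$ for $2\le k\le r-2$, together with $\vec d(r-1)=\alpha$, give the second assertion. Note that $\vec d(1)$ is not claimed, consistent with $d$ only being usable when $b_1$ is present.

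The only delicate point is bookkeeping. One must check that each modified set really is an edge (one vertex per part, using $i=i'$), and that the ranges $r..r-1+q$ and $>r-1+q$ are handled by the appropriate pair $(d,b_1)$ or $(d',b_2)$. I do not expect any genuine obstacle beyond this.
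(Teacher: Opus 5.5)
Your proof is correct and follows essentially the same route as the paper. You build the same modified edges, replacing $b_k$ by a vertex of $V_k$ that differs from $b_q$ in coordinate $r-1$, and pair $d$ with $b_1$ for $2\le k\le r-2$ and $d'$ with $b_2$ for $k=1$; this forces the edge's color to be $c_k$. Your uniform treatment of both cases $d\in V_r$ and $d\in V_{r-1}$ via $b_q$, and your weaker requirement $\vec y(r-1)\neq\alpha$ in place of $\notin\{1,2\}$, are only cosmetic differences.
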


\begin{proof}
    We have shown that $\vec b_j(r-1) =\alpha$ for all $j \in [r-2]$, and $\vec d(r-1) = \alpha$. First fix $2 \leq k \leq r-2$. By Claim~\ref{samepartdiffcolor}, we can find a vertex $b_{k}' \in V_{k}$ with $\vec b_{k}'(r-1) \notin \{1,2\}$. 

If $d \in V_r$, then the edge $f= \{b_1, b_2, \ldots, b_{r-1}, d\} - \{b_{k}\} \cup \{b_{k}'\}$ has some color $c_{j'}$, and so the $j'$th coordinate of the vectors corresponding to these $r$ vertices must be the same. By construction $j' \neq r-1$ since $\vec b'_k(r-1) \neq \vec b_{r-1}(r-1)$. We cannot have $j' > r-1+q$ as $\vec d$ and $\vec b_{r-1}$ differ in these coordinates, and we cannot have $r-1 < j' \leq r-1+q$ as $\vec d$ and $\vec b_{1}$ differ in these coordinates.
For any $1\leq j \leq r-2$, $j \neq k$,  the vector $\vec b_{j}$ differs from $\vec b_{r-1}$ in coordinate $j$. The only coordinate remaining is $j' = k$. In particular $\vec b_{r-1}(k) = 2 = \alpha$, so $\vec{d}(k) = \vec b_j(k) =\vec b_{r-1}(k) =\alpha$ for all $j \ne k$.  
Similarly we can show that if $d \in V_{r-1}$, then $\vec{d}(k)=\vec b_j(k) = \vec b_r(k) = 1 = \alpha$ for all $j \neq k$.

Finally, fix $b_1' \in V_1$ such that $\vec b_1'(r-1) \notin \{1,2\}$. If $ d \in V_r$, let $f' = \{b_1', b_2, \ldots, b_{r-1}, d'\}$, and let $k'$ be a coordinate in which the vectors of these $r$ vertices coincide. By a similar argument, $k' = 1$, and thus for all $j \neq 1, \vec b_j(1) = \vec b_{r-1}(1) = 2 = \alpha$. Moreover, if $d \in V_{r-1}$, then $\vec b_j(1) = 1 = \alpha$.
\end{proof}
 From now, we assume $d \in V_r$. The case where $d \in V_{r-1}$ is symmetric, so we omit it.

\begin{proof}[Proof of Theorem~\ref{main} for $r \ge 4$]
    
    Let $b_{r-1}' \in V_{r-1}$ be a vertex with $\vec b_{r-1}'(r-1) \notin \{1,2\}$. We consider the edge $\{b_1, \ldots, b_{r-2}, b_{r-1}', d\}$ whose $r$ vertices must agree in some coordinate $j'$. By the construction of $b_1, \ldots, b_{r-2}, b_{r-1}'$ and Claim~\ref{all2s}, we have $j'\notin [r-1]$. Moreover, $d$ and $b_{1}$ differ in coordinates $r$ through $r-1+q$. Thus $j' \geq r-1+q + 1$. Without loss of generality, let $j' = r+q$, and say \[\vec d(r+q) = \vec b_1(r+q) = \ldots = \vec b_{r-2}(r+q)=\vec b_{r-1}'(r+q)  =: \beta \neq 1,\] (since $\vec d(r+q) \neq 1$ by construction).

    Now let $d''$ be a vertex such that $\vec d''(r-1) \neq 2$, $\vec d''(i) \neq \vec b_{1}(i)$ for $r\leq i \leq r-1+q$, and $\vec d''(i) \neq 1$ for $r+q+1\leq i \leq r+t$. This involves $1+ q + (t+1-q - 1) = t+1$ coordinates, and hence such a vertex exists by Claim~\ref{t+1diff}.
    
    The vertex $ d''$ belongs to some set $V_i$. If $i =r$, then the vertices of edge $\{b_1, \ldots, b_{r-1}, d''\}$ must agree in some coordinate $j'$.  By construction, $j ' \notin [r-1+q]$. Moreover, $j' \notin \{r+q+1, \ldots, r+t\}$ since $\vec d''$ disagrees with $b_{r-1}$ in these coordinates. Hence $j' = r+q$. But $\vec b_{r-2}(r+q) = \beta \neq 1 = \vec b_{r-1}(r+q)$, a contradiction. One can prove $i \neq r-1$ similarly.

Thus we may assume $i \in [r-2]$. We consider the vectors of the vertices in the edge $\{b_1, \ldots, b_r\} - \{b_i\} \cup \{d''\}$ which must agree in some coordinate $j''$. Since $\vec b_{r-1}$ and $\vec b_r$ differ in their first $r-1+q$ coordinates, $j'' \geq r+q$. Moreover, $\vec b_r$ and $\vec d''$ differ in all coordinates from $r+q+1$ to $r+t$. Thus $j'' = r+q$. Since $r \neq 3$, then there exists some $k \in \{1, \ldots, r-2\} - \{i\}$. We obtain
\[1 = \vec b_{r-1}(r+q) = \vec b_r(r+q) = \vec d''(r+q) = \vec b_k(r+q) = \beta.\]
This contradicts that $\beta \neq 1$.
\begin{figure}[ht]
\centering
\begin{tabular}{l||ccccc|c|ccc|c|ccc||l}
     &\multicolumn{13}{c||}{Component}& \multirow{2}{*}{Part}\\
     &$c_1$&$c_2$&$c_3$&$\cdots$&$c_{r-2}$&$c_{r-1}$&$c_r$&$\cdots$&$c_{r-1+q}$&$c_{r+q}$&$c_{r+q+1}$&$\cdots$&$c_{r+t}$&\\
     \hline\hline
     $\vec b_1$&$*$&$2$&$2$&$\cdots$&2&$2$&&&&$\beta$&&&&$V_1$\\
     $\vec b_2 $&$2$&$*$&$2$&$\cdots$&2&$2$&&&&$\beta$&&&&$V_2$\\
     $\vec b_3 $&$2$&$2$&$*$&$\cdots$&2&$2$&&&&$\beta$&&&&$V_3$\\
     $\vdots$&$\vdots$&$\vdots$&$\vdots$&$\ddots$&$\vdots$&$\vdots$&&&&$\vdots$&&&&$\vdots$\\
    
     $\vec b_{r-2}$&$2$&$2$&$2$&$\cdots$&$*$&$2$&&&&$\beta$&&&&$V_{r-2}$\\
     $\vec b_{r-1}$&$2$&$2$&$2$&$\cdots$&$2$&$2$&$2$&$\cdots$&$2$&$1$&$1$&$\cdots$&$1$&$V_{r-1}$\\
     
     $\vec b_{r}$&$1$&$1$&$1$&$\cdots$&$1$&$1$&$1$&$\cdots$&$1$&$1$&$1$&$\cdots$&$1$&$V_{r}$\\
     \hline
     $\vec{d}$&$ $&$2$&$2$&$\cdots$&2&$2$&$\ne\vec b_{1}$&$\cdots$&$\ne\vec b_{1}$&$\beta$&$\neq 1$&$\cdots$&$\ne 1$&$V_{r}$\\
     
     $\vec{d'}$&$2$& & && & $2$ &$\ne\vec b_{2}$&$\cdots$&$\ne\vec b_{2}$& $\ne 1$&$\neq 1$&$\cdots$&$\ne 1$&$V_{r}$\\
     
     $\vec b_{r-1}'$&&&&&&$*$&&&&$\beta$&&&&$V_{r-1}$\\
     $\vec d''$&&&&&&$\ne 2$&$\ne \vec b_{1}$&$\cdots$&$\ne \vec b_{1}$&&$\neq 1$&$\cdots$&$\ne 1$&$\rightarrow\leftarrow$
\end{tabular}
\label{fig:rbig}
\caption{Vectors used in the proof of case $r \geq 4$.}
\end{figure}
\end{proof}

\section{Proof of Theorem~\ref{cov2k}}

Chen et al.~\cite{CFGLT} proved the following for a special class of $k$-edge-colorings.

\begin{obs}[\cite{CFGLT}]\label{cfglt1}Let $G$ be a biclique, and let $k\geq 2$ be an integer. For any $k$-edge-coloring of $G$, either $V(G)$ can be covered by at most $2k-2$ monochromatic components or each color class is a disjoint union of bicliques.
\end{obs}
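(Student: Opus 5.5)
The plan is to prove the contrapositive by a direct covering argument using stars. Write the bipartition of $G$ as $X \cup Y$. Suppose some color class, say color $c_1$, is \emph{not} a disjoint union of bicliques. I will then exhibit a cover of $V(G)$ by at most $2k-2$ monochromatic components.

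First I locate a suitable pair of vertices. Consider the components of the spanning subgraph formed by the edges of color $c_1$; each is a connected bipartite graph, with isolated vertices treated as trivial bicliques. By assumption, some component $C$ is not a complete bipartite graph on its two sides $C \cap X$ and $C \cap Y$. So there are $x \in C\cap X$ and $y \in C\cap Y$ such that $xy$ is not an edge of color $c_1$. Since $G$ is a biclique, $xy$ is an edge of $G$, so it has some other color, say $c_2 \neq c_1$. The key facts are:
\begin{itemize}
\item $x$ and $y$ lie in the same component of color $c_1$, namely $C$;
\item $x$ and $y$ lie in the same component of color $c_2$, namely the one containing the edge $xy$.
\end{itemize}

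Next comes the cover. For each color $c_i$, take the component of color $c_i$ containing $x$, and the component of color $c_i$ containing $y$. This gives at most $2k$ components.
\begin{itemize}
\item Every $v \in Y$ is joined to $x$ by an edge of some color $c_i$, so $v$ lies in $x$'s component of color $c_i$. Hence $x$'s $k$ components cover $Y \cup \{x\}$.
\item Symmetrically, $y$'s $k$ components cover $X \cup \{y\}$.
\end{itemize}
Together these cover $V(G)$. By the two key facts, the color-$c_1$ components of $x$ and $y$ coincide, and so do the color-$c_2$ components. So the number of distinct components used is at most $2k-2$, which proves the observation.

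The only step needing care is the first: justifying the existence of a non-adjacent pair $x,y$ on opposite sides of a single monochromatic component. This is immediate from the definition of a complete bipartite graph once components are taken with respect to the bipartition inherited from $G$. There is no real obstacle beyond checking degenerate cases. An empty color class, or a component that is a single vertex or a star, is already a biclique, so it never triggers the argument.
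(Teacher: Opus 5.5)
Your proof is correct, and it is the standard argument. The paper cites this observation from \cite{CFGLT} without reproving it, but opens the proof of Theorem~\ref{cov2k} with exactly your reasoning: pick $u\in X$ and $v\in Y$ in a common color-$1$ component with $uv$ of color $2$; the components of $u$ cover $Y$, those of $v$ cover $X$, and the color-$1$ and color-$2$ components coincide. The one step worth stating explicitly is that a connected bipartite component with at least one edge has the unique bipartition $(C\cap X, C\cap Y)$, so ``not a biclique'' does give you a pair $x,y$ across $C$ whose edge has a color other than $c_1$.
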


\begin{thm}[Corollary 1 in~\cite{CFGLT}]\label{cfglt2}For $k \in \{2,3\}$ in any $k$-edge-coloring of a biclique $G$ in which every color class is a disjoint union of bicliques, there exists a covering of $V(G)$ using at most $k$ monochromatic components. Moreover, these components may be all of the same color.   
\end{thm}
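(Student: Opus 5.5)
Since every color class is a disjoint union of bicliques, I would encode the coloring combinatorially and then do a case analysis on the block structure of one color. Write $V(G)=X\cup Y$. For each color $c_i$, the components of color $c_i$ partition $X\cup Y$ into blocks $(A^i_j,B^i_j)$ with $A^i_j\subseteq X$ and $B^i_j\subseteq Y$. A vertex isolated in color $c_i$ gives a block with one side empty. The hypothesis says an edge $xy$ has color $c_i$ \emph{if and only if} $x$ and $y$ lie in the same block of color $c_i$. As in the notation of the paper, give each vertex a vector $\vec v\in[n]^k$ of block indices. Since each edge receives exactly one color, the hypothesis becomes:
\begin{center}
for every $x\in X$ and $y\in Y$, the vectors $\vec x$ and $\vec y$ agree in exactly one coordinate.
\end{center}
The goal is to find a color $c_i$ with at most $k$ blocks whose union is $X\cup Y$, or at least $k$ blocks of a single color meeting every vertex.

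For $k=2$, fix color $c_1$. If $c_1$ has a single block, it covers everything. Otherwise, take $x\in A^1_j$. Then $x$ is joined in color $c_2$ to every vertex of $Y\setminus B^1_j$. Since color-$c_2$ components are bicliques, each such component is a full biclique. The key step is to show the following dichotomy.
\begin{itemize}
\item If color $c_1$ has at least three blocks with nonempty $X$-side or $Y$-side, these color-$c_2$ bicliques overlap, and all of $X\cup Y$ lies in one color-$c_2$ component.
\item Otherwise color $c_1$ has at most two blocks, and these two blocks already cover $X\cup Y$.
\end{itemize}
Both outcomes give a monochromatic cover by at most $2$ components of a single color.

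For $k=3$, I would use the ``exactly one agreement'' property more carefully. Fix $x\in X$. Each $y\in Y$ agrees with $\vec x$ in exactly one coordinate, so $Y$ is split into three sets $Y_1,Y_2,Y_3$ according to which block of $x$ it shares. Do the same from a vertex $y\in Y$ to split $X$. Restricted to $X\setminus A^1_{\vec x(1)}$ versus $Y\setminus B^1_{\vec x(1)}$, the coloring uses only colors $c_2,c_3$, and each class is still a union of bicliques. So the $k=2$ analysis applies there.

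I would then split into cases on the number of blocks of color $c_1$ that have both sides nonempty: at most $3$, or at least $4$.
\begin{itemize}
\item With at most $3$ such blocks, plus possible singleton blocks, I would aim to show that the singletons force a second color to collapse, and then choose whichever color has at most $3$ blocks.
\item With at least $4$ blocks, the two-color structure between different $c_1$-blocks, together with the biclique property, should force colors $c_2$ and $c_3$ to have very few components. I expect this to be a Latin-square-like configuration. I would then show that some color, say $c_2$, has at most $3$ components covering everything.
\end{itemize}

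The main obstacle is the $k=3$ case analysis, in particular ruling out configurations in which every color has at least $4$ blocks. For this I would try a counting argument. Let $p_i$ be the number of blocks of color $c_i$. The exactly-one-agreement property makes the block structure resemble an orthogonal array. I would then show that $p_1,p_2,p_3\ge 4$ contradicts the biclique condition on pairs of blocks from different colors, using the fact that a biclique component is closed under the relevant joins.
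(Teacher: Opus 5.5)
The paper gives no proof of this statement; it is quoted from Chen, Fujita, Gy\'arf\'as, Lehel and T\'oth. Your proposal therefore has to stand on its own, and for $k=3$ it does not. Both of your cases are stated only as intentions (``I would aim to show'', ``should force'', ``I expect''), and the counting argument meant to exclude $p_1,p_2,p_3\ge 4$ is never given.

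The one concrete reduction you state is false. The pair $X\setminus A^1_{\vec x(1)}$, $Y\setminus B^1_{\vec x(1)}$ contains $A^1_j\times B^1_j$ for every other $c_1$-block $j$, and all those edges have color $c_1$. Only $A^1_j\times(Y\setminus B^1_j)$ is $\{c_2,c_3\}$-colored. Inside it a vertex may see just one of $c_2,c_3$, so your $k=2$ analysis does not transfer to it.

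There is also a hypothesis problem. You explicitly allow blocks with an empty side, and then both your $k=2$ dichotomy and the ``same color'' clause fail. Take $K_{4,1}$ with $X=\{a_1,a_2,a_3,a_4\}$ and $Y=\{y\}$, and color $a_1y,a_2y$ with $c_1$ and $a_3y,a_4y$ with $c_2$. Each color class is a single $K_{2,1}$, and $c_1$ has three blocks in your encoding. Yet $a_1$ lies on no $c_2$-edge, and no two components of one color cover $V(G)$. So you must assume every vertex sees every color, i.e.\ all blocks have both sides nonempty. That is the only setting in which the paper uses the result, in the proof of Theorem~\ref{cov2k}. Under that assumption your $k=2$ dichotomy is fine.

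The missing idea under the spanning hypothesis is a partition property that follows from your exactly-one-agreement observation. For every $x\in X$, the $Y$-sides of the $k$ blocks containing $x$ partition $Y$. For each color, the $Y$-sides of its blocks partition $Y$ into nonempty sets.

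For $k=2$, this says $Y\setminus B^1_j$ is a $c_2$-side for every $j$. If $c_1$ had three blocks, two such complements would be distinct $c_2$-sides that both contain the third $B^1_j$, which is impossible.

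For $k=3$, suppose $c_1$ has at least four blocks.
\begin{itemize}
\item Take $x_1,x_2,x_3$ in the $X$-sides of three of these blocks and $y$ in the $Y$-side of a fourth. Each $x_j$ agrees with $y$ in coordinate $2$ or $3$.
\item So two of them, say $x_a,x_b$, share a block of some color $c\in\{c_2,c_3\}$; call its $Y$-side $P$. Let $Q_a,Q_b$ be the $Y$-sides of their blocks in the remaining color.
\item Then $B^1_a\sqcup Q_a=Y\setminus P=B^1_b\sqcup Q_b$. Since $Q_a\ne Q_b$, they are disjoint, which forces $Q_a=B^1_b$, $Q_b=B^1_a$ and $P=Y\setminus(B^1_a\cup B^1_b)$.
\item For the third vertex $x_e$, its $c$-side misses $B^1_e$, while $P\supseteq B^1_e$. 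So its $c$-side is disjoint from $P$ and lies inside $B^1_a\cup B^1_b$.
\item Its remaining-color side contains the fourth block's $Y$-side. Hence it is neither $B^1_a$ nor $B^1_b$, so it avoids both. This forces the $c$-side of $x_e$ to equal $B^1_a\cup B^1_b$.
\end{itemize}
Thus color $c$ has exactly two components, and they cover $V(G)$. An argument of this kind is what your $k=3$ case analysis needs.
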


{\bf Notation}. Suppose $G$ is a biclique with parts $X$ and $Y$. For $x \in V(G)$ and a color $i$, let $N_i(x)$ denote the set of color-$i$ neighbors of $x$. For $S \subseteq V(G)$, define $N_i(S) = \bigcup_{x \in S} N_i(x)$. Let $G_i(x)$ denote the monochromatic component of color $i$ in $G$ which contains $x$ (so $\{x\} \cup N_i(x) \subseteq G_i(x)$). For subsets $A \subseteq X$ and $B \subseteq Y$, let $G[A,B]$ denote the induced subgraph of $G$ with vertex set $A \cup B$. Note that $G[A,B]$ is also a biclique.\medskip

\begin{proof}[Proof of Theorem~\ref{cov2k}.]
    For $k=2$, by Observation~\ref{cfglt1} and Theorem~\ref{cfglt2}, we have that $2k-2 = k$ components suffice. We prove the statement for $k=3$. Suppose $G$ is a $3$-edge-colored biclique with parts $X$ and $Y$. If every color class of $G$ is a disjoint union of bicliques, then $k$ components suffice by Theorem~\ref{cfglt2}. Otherwise, without loss of generality, there is some $u \in X$ and $v \in Y$ in the same component of color class $1$ such that $uv$ receives color $2$.

    Because the coloring of $G$ is spanning, $N_3(u)$ and $N_3(v)$ are nonempty. The components in $\{G_1(u), G_2(u), G_3(u)\}$ cover $Y$, and $\{G_1(v), G_2(v), G_3(v)\}$ cover $X$. By assumption $G_1(u) = G_1(v)$ and $G_2(u) = G_2(v)$. If any edge of $G[N_3(u),N_3(v)]$ were of color $3$, then $G_3(u) = G_3(v)$ and $V(G)$ would have a $3$-component cover, namely, $\{G_1(u) , G_2(u), G_3(u)\}$. Hence, $G[N_3(u),N_3(v)]$ is a $2$-edge-colored biclique and admits a covering with at most $2$ monochromatic components. If the covering has only one component, then together with $G_1(u)$ and $G_2(u)$, we obtain a covering with 3 components. So suppose the 2 components covering $G[N_3(u),N_3(v)]$ are $C$ and $D$. Define $C_X = C \cap X, C_Y = C \cap Y$, and similar for $D_X, D_Y$. 
    
    {\em Case 1}: $C$ and $D$ are of the same color and each of $C_X, C_Y, D_X$ and $D_Y$ has at least one vertex. We may assume $C$ and $D$ have color 1. Additionally, we have that $G[C_X, D_Y]$ and $G[C_Y, D_X]$ are bicliques in color $2$, as otherwise $C$ and $D$ would be in the same color-$1$ component. Note that $N_1(C_X) \cap G_1(u) = \varnothing$, as otherwise, $G_1(u)$ and $C$ would be in the same color-$1$ component and $\{G_1(u), G_2(u), D\}$ covers $V(G)$.
    
    Thus, $G[C_X, G_1(u) \cap Y]$ and similarly $G[D_X, G_1(u) \cap Y]$ are $2$-edge-colored bicliques in colors $2$ and $3$. Consider $N_2(C_X) \cap G_1(u)$ and $N_2(D_X) \cap G_1(u)$. Suppose these sets were not disjoint. Then, there would be a vertex in $C_X$ and a vertex in $D_X$ in the same color-$2$ component of $G$. However, since $G[C_X, D_Y]$ and $G[C_Y, D_X]$ are bicliques in color $2$, we would have that $C$ and $D$ are in the same color-$2$ component of $G$ and we can find a covering using 3 components.
    
    Hence, we have that $N_2(C _X) \cap G_1(u) \subseteq N_3(D_X) \cap G_1(u)$ since $N_3(D_X) \cup N_2(D_X)$ covers $G_1(u) \cap Y$. This implies that $G_1(u) \cap Y \subseteq N_3(C_X) \cup N_3(D_X)$. A similar argument applied towards $G_2(u) \cap Y$ yields $ G_2(u) \cap Y \subseteq N_3(C_X) \cup N_3(D_X)$. As $C_X, D_X \subseteq N_3(v)\subseteq G_3(v)$ every vertex of $(G_1(u) \cup G_2(u)) \cap Y$ is contained in the component $G_3(v)$. The remaining vertices in $Y$ belong to the component $G_3(u)$, so $Y$ can be covered by 2 components in color $3$. For every $x \in X$, because the coloring of $G$ is spanning, we have that $N_3(x)$ is nonempty, and so $x$ is in one of the two color-$3$ components. Thus, $V(G)$ admits a cover by at most two color-$3$ components.\medskip

    {\em Case 2}: $C$ and $D$ are of different colors. Note that this also encompasses the case where $C$ and $D$ are the same color, say 1, and some of $C_X, C_Y, D_X, D_Y$ are empty. Indeed, if say $C_X$ is empty, then  since $C$ is a connected subgraph of a bipartite graph, $C_Y$ is a single vertex, and we may instead take $C$ to be the color-$2$ component containing the vertex.

    Without loss of generality, suppose $C$ is in color $1$ and $D$ in color $2$. If $C \subseteq D$ or $D \subseteq C$, then $G$ can be covered by at most 3 monochromatic components. So without loss of generality, we may assume $C_X - D_X \neq \varnothing$. If $D_Y - C_Y$ is also nonempty, then the edges between $C_X - D_X$ and $D_Y - C_Y$ must receive some colors, but they cannot be $1$ nor $2$ due to the maximality of $C$ and $D$, a contradiction. It follows that $D_Y \subseteq C_Y$ and $D_X - C_X \neq \varnothing$. Moreover, for any $w \in D_X -C_X$, all edges from $w$ to $C_Y$ are color $2$ and therefore $D_Y = C_Y$.

    Again, no edge of $G[C_X, G_1(u)]$ may receive color $1$. This subgraph is therefore a $2$-edge-colored biclique in colors $2$ and $3$. We claim that 
    \begin{equation}\label{N3} G_1(u) \cap Y \subseteq N_3(C_X) \cup N_3(D_X).\end{equation}Indeed, if $G_1(u) \cap Y \subseteq N_3(C_X)$, then we're done. So suppose there exists $w \in G_1(u) \cap Y$ such that $w \notin N_3(C_X)$. Then all edges from $w$ to $C_X$ are in color $2$. If $w$ is incident to any color-2 edges to $D_X$, then since $D_Y = C_Y$, $D$ and $C$ belong to the same color-2 component and $G$ admits a $3$-covering. If all edges in $G[w, D_X]$ are in color $1$, then $D_X \subseteq G_1(u)$ and the $G$ admits the covering $\{G_1(u), G_2(u), C\}$. It follows that whenever $w \notin N_3(C_X)$, we obtain $w \in N_3(D_X)$, proving~\eqref{N3}.

    The same claim can be proven for $G_2(u)$, namely that $ G_2(u) \cap Y \subseteq N_3(C_X)\cup N_3(D_X)$. As in the previous case, $N_3(C_X), N_3(D_X) \subseteq G_3(v)$. Thus, every vertex in $Y$ is in $G_3(v)$ or $G_3(u)$. Again, for every $x \in X$, we have $N_3(x) \ne \varnothing$ due to the spanning coloring of $G$, implying that $G$ admits a covering by at most 2 components in color $3$. Hence, $\cov(2,3) = 3$.
\end{proof}

\bigskip {\bf Acknowledgement}. We thank Andr\'as Gy\'arf\'as for his helpful comments and discussions. 

\printbibliography

\end{document}